\documentclass[12pt]{amsart}

\usepackage{amsmath,amssymb,amsthm}
\usepackage{booktabs}
\usepackage{microtype}
\usepackage{url}

\newtheorem{theorem}{Theorem}

\newtheorem{proposition}[theorem]{Proposition}
\newtheorem{corollary}[theorem]{Corollary}

\theoremstyle{definition}
\newtheorem{definition}[theorem]{Definition}
\theoremstyle{remark}
\newtheorem{remark}[theorem]{Remark}
\newtheorem{observation}[theorem]{Observation}

\title{The Collision Transform}

\author{Alexander S.\ Petty}
\email{alexander.petty@gmail.com}
\date{March 2026}
\subjclass[2020]{11A63, 11N05, 11M06}

\begin{document}
\begin{abstract}
The collision invariant $S_{\ell}(p)$, introduced in the
companion paper~\cite{paperA}, is a function of
$p \bmod b^{\ell+1}$ and therefore lives on the finite
group $(\mathbb{Z}/b^{\ell+1}\mathbb{Z})^{\times}$. Its
Fourier expansion over Dirichlet characters modulo
$b^{\ell+1}$ is the \emph{collision transform}. The
reflection identity forces all even-character coefficients
of the centered invariant to vanish: only odd characters
contribute.

The centered prime harmonic sum
$F^{\circ}(s) = \sum_p S^{\circ}_p / p^s$ is therefore a
finite linear combination of non-trivial odd character
sums $\sum_p \chi(p)/p^s$, with no principal-character
term. At $s = 1$, each sum converges by Mertens' theorem
for arithmetic progressions. Convergence below $s = 1$
is conditional on the absence of $L$-function zeros
above a given depth. Computation indicates convergence
persists
to at least $s = 0.6$ in base~$10$ and to $s = 0.5$ in
base~$3$. The real parts of the products
$\hat{S}^{\circ}(\chi) \cdot P(s, \chi)$ have mixed
signs, so convergence is a collective constraint on the
joint zero distribution, not a test of each $L$-function
individually.

Aggregating the collision deviation across bases with
a fixed convergent weighting produces the \emph{base sum},
a function on primes that reveals mod-$3$ structure.
For bases with $3 \nmid b$, the reflection
$a \mapsto m - a$ fixes a unique residue class
modulo~$3$, and the mean of $S$ over units in that
class equals the grand mean $-1/2$ (the neutrality
theorem). Removing the mod-$3$ component introduces a
principal-character term that is absent from
$F^{\circ}$. The base-summed harmonic sum is
negligible: the collision invariant's structural
content is base-specific.
\end{abstract}
\maketitle

\section{The Transform}

By the finite determination theorem~\cite{paperA}, the
collision deviation $S_{\ell}$ is a function on the finite
group $(\mathbb{Z}/m\mathbb{Z})^{\times}$ where
$m = b^{\ell+1}$.

\begin{definition}
The \emph{collision transform} of $S_{\ell}$ is
\[
\hat{S}_{\ell}(\chi) = \frac{1}{\phi(m)}
\sum_{a \in (\mathbb{Z}/m\mathbb{Z})^{\times}}
S_{\ell}(a)\, \overline{\chi}(a)
\]
for each Dirichlet character $\chi$ modulo $m$. The
inverse transform recovers $S_{\ell}$:
\[
S_{\ell}(p) = \sum_{\chi \bmod m}
\hat{S}_{\ell}(\chi)\, \chi(p).
\]
\end{definition}

This is standard Fourier analysis on the finite abelian
group $(\mathbb{Z}/m\mathbb{Z})^{\times}$. The novelty
is the function being analyzed, not the tool.

\section{Structural Properties of the Transform}

The reflection identity from~\cite{paperA} constrains
the transform in a way that has immediate consequences
for which $L$-functions can participate.

\begin{theorem}[Trivial coefficient]\label{thm:trivial}
$\hat{S}_{\ell}(\chi_0) = -1/2$.
\end{theorem}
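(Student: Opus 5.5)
The coefficient at the principal character is the average of $S_{\ell}$ over the unit group:
\[
\hat{S}_{\ell}(\chi_0) = \frac{1}{\phi(m)}\sum_{a\in(\mathbb{Z}/m\mathbb{Z})^{\times}} S_{\ell}(a),
\]
since $\overline{\chi_0}(a)=1$ on units. So the task is to show that the mean of $S_{\ell}$ over units modulo $m=b^{\ell+1}$ equals $-1/2$. The tool is the reflection identity from~\cite{paperA}. I expect it to relate $S_{\ell}(a)$ and $S_{\ell}(m-a)$ by an identity of the form $S_{\ell}(a)+S_{\ell}(m-a) = -1$, i.e.\ the centered invariant $S^{\circ}=S+1/2$ is odd under $a\mapsto -a$. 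This is consistent with the abstract, which says the even coefficients of the centered invariant vanish and that the grand mean is $-1/2$.

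The argument pairs units under the involution $a\mapsto m-a$, which preserves $(\mathbb{Z}/m\mathbb{Z})^{\times}$. First check that it has no fixed points on units: $a\equiv -a \pmod m$ means $2a\equiv 0$. For a unit $a$ this forces $m\mid 2$, which fails for $b\ge 2$ and $\ell\ge 0$ except in the degenerate case $m=2$. That case can be checked directly or excluded by the standing hypotheses on $b$. The units therefore split into $\phi(m)/2$ disjoint pairs $\{a,m-a\}$. Summing the reflection identity over these pairs gives
\[
\sum_a S_{\ell}(a) = \frac{\phi(m)}{2}\cdot(-1),
\]
and dividing by $\phi(m)$ yields $\hat{S}_{\ell}(\chi_0)=-1/2$.

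Equivalently, one can write $S_{\ell} = S^{\circ}_{\ell} - 1/2$. The constant $-1/2$ contributes exactly $-1/2$ to the $\chi_0$-coefficient. The odd function $S^{\circ}_{\ell}$ contributes $0$, because $\chi_0$ is even: substituting $a\mapsto -a$ in the sum negates it. This second form is the one I would write up, since it also sets up the next result that all even-character coefficients of $S^{\circ}$ vanish.

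The main obstacle is pinning down the exact normalization of the reflection identity as stated in~\cite{paperA}. Its constant must be $-1$, i.e.\ twice the claimed mean, and it must hold for all units modulo $m$, not only for primes. If \cite{paperA} states it only for primes $p$, I would extend it to all units using the finite determination theorem. Every unit class contains primes by Dirichlet, and $S_{\ell}$ depends only on the class, so the identity passes to all residues.
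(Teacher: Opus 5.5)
Your pairing argument is correct and is essentially the paper's proof: summing the reflection identity $S(a)+S(m-a)=-1$ over the pairs $\{a,m-a\}$ of units gives mean $-1/2$. One caution about your preferred write-up: the paper defines $S^{\circ}(a)=S(a)-\overline{S}_{R(a)}$ (centered by spectral-class means, not by $-1/2$), so $S_{\ell}=S^{\circ}_{\ell}-1/2$ is false in the paper's notation; give the odd function $S_{\ell}+1/2$, for which your argument is valid, a different symbol.
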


\begin{proof}
By the reflection identity~\cite{paperA}, the map
$a \mapsto m - a$ pairs units with
$S(a) + S(m{-}a) = -1$. Each pair averages to $-1/2$.
\end{proof}

\begin{definition}
The \emph{centered collision deviation} is
$S^{\circ}(a) = S(a) - \overline{S}_{R(a)}$,
where $\overline{S}_R$ is the average of $S$ over units
in spectral class $R = (a{-}1) \bmod b$.
\end{definition}

\begin{theorem}[Antisymmetry]\label{thm:antisymmetry}
The centered transform satisfies
$\hat{S}^{\circ}(\chi) = 0$ for every even character
($\chi(-1) = 1$).
\end{theorem}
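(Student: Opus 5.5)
The plan is to show that the centered deviation is an odd function on $(\mathbb{Z}/m\mathbb{Z})^{\times}$, namely
\[
S^{\circ}(m-a) = -S^{\circ}(a) \quad \text{for every unit } a,
\]
and then to read off the vanishing of the even coefficients by a change of variables. The only input is the reflection identity of~\cite{paperA}, $S(a)+S(m{-}a)=-1$, already used in the proof of Theorem~\ref{thm:trivial}. What has to be added is how the reflection interacts with the spectral classes $R(a)=(a-1)\bmod b$.

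First I will track the spectral class under reflection. Since $b \mid m$, we have
\[
R(m-a)\equiv -a-1 \equiv -(a-1)-2 \pmod b .
\]
So $a\mapsto m-a$ carries class $R$ onto class $R' = (-R-2)\bmod b$. This map is an involution on units, because $a$ is a unit exactly when $m-a$ is. It therefore restricts to a bijection from the units in class $R$ onto the units in class $R'$, and in particular the two classes contain the same number $N_R$ of units. I only need to consider nonempty classes, since $\overline{S}_R$ is evaluated only at classes $R(a)$ of actual units $a$.

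Next I will sum the reflection identity over the units $a$ in class $R$. This gives $\sum_{R} S(a) + \sum_{R'} S(a') = -N_R$, and dividing by $N_R$ yields
\[
\overline{S}_R + \overline{S}_{R'} = -1 .
\]
Consequently
\[
S^{\circ}(m-a) = S(m-a) - \overline{S}_{R'} = \bigl(-1-S(a)\bigr) - \bigl(-1-\overline{S}_R\bigr) = -S^{\circ}(a).
\]
This step is the crux. The main point to check is that the class means transform compatibly, and this rests entirely on the reflection being a bijection between classes of equal size. That is why I verify the class-mapping computation explicitly rather than assume it.

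Finally, let $\chi$ be even, so that $\overline{\chi}(-a)=\overline{\chi}(a)$. Reindexing the sum defining $\phi(m)\hat{S}^{\circ}(\chi)$ by $a\mapsto m-a$, which is a permutation of the units, gives
\[
\sum_a S^{\circ}(a)\overline{\chi}(a) = \sum_a S^{\circ}(-a)\overline{\chi}(-a) = -\sum_a S^{\circ}(a)\overline{\chi}(a).
\]
Hence $\hat{S}^{\circ}(\chi)=0$. This argument does not require pairing distinct elements, so fixed points of the reflection, which occur only in degenerate cases such as $m=2$, cause no trouble.
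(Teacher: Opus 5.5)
Your proposal is correct and follows essentially the same route as the paper: derive $S^{\circ}(m-a) = -S^{\circ}(a)$ from the reflection identity and the class pairing $R \mapsto b-2-R$, then reindex the coefficient sum for even $\chi$ to get $\hat{S}^{\circ}(\chi) = -\hat{S}^{\circ}(\chi)$. Your explicit check that the reflection maps each spectral class bijectively onto its partner, so the two classes have equal size, makes rigorous the step the paper summarizes as ``averaging over each paired class.''
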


\begin{proof}
The reflection identity $S(a) + S(m{-}a) = -1$ pairs
spectral classes: $R(a) + R(m{-}a) = b - 2$. Averaging
over each paired class gives
$\overline{S}_R + \overline{S}_{b-2-R} = -1$. Since
$S^{\circ}(a) = S(a) - \overline{S}_{R(a)}$:
\[
S^{\circ}(a) + S^{\circ}(m{-}a) =
\bigl[S(a) + S(m{-}a)\bigr]
- \bigl[\overline{S}_R + \overline{S}_{b-2-R}\bigr]
= -1 - (-1) = 0.
\]
For even $\chi$ ($\chi(-1) = 1$),
$\overline{\chi}(m{-}a) = \overline{\chi}(-a)
= \overline{\chi}(a)$, so
\[
\hat{S}^{\circ}(\chi)
= \frac{1}{\phi(m)} \sum_a S^{\circ}(a)\,
\overline{\chi}(a)
= -\hat{S}^{\circ}(\chi),
\]
hence $\hat{S}^{\circ}(\chi) = 0$.
\end{proof}

\begin{corollary}
The centered collision deviation decomposes as
\[
S^{\circ}(p) = \sum_{\substack{\chi \bmod m \\
\chi(-1) = -1}} \hat{S}^{\circ}(\chi)\, \chi(p).
\]
Only odd characters contribute. The bilateral symmetry
of the digit function determines which $L$-functions
the collision transform can see.
\end{corollary}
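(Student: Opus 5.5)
The plan is to combine the Fourier inversion formula from the definition of the collision transform with Theorem~\ref{thm:antisymmetry}. First I will check that $S^{\circ}$ is a genuine function on $(\mathbb{Z}/m\mathbb{Z})^{\times}$. $S_{\ell}$ already is one, by the finite determination theorem~\cite{paperA}. The spectral class $R(a) = (a-1) \bmod b$ depends only on $a \bmod b$, and since $b \mid m$ it is well defined on residues modulo $m$. Hence $\overline{S}_{R(a)}$, and therefore $S^{\circ}(a) = S(a) - \overline{S}_{R(a)}$, is well defined on the unit group. The transform $\hat{S}^{\circ}(\chi)$ is then defined by the same formula as $\hat{S}_{\ell}(\chi)$, with $S^{\circ}$ in place of $S_{\ell}$.

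Next I will apply inversion. For any function $f$ on a finite abelian group $G$, the orthogonality relation
\[
\frac{1}{|G|}\sum_{\chi}\chi(a)\overline{\chi}(a') = \mathbf{1}_{a=a'}
\]
gives $f(a) = \sum_{\chi} \hat f(\chi)\chi(a)$. Applied to $f = S^{\circ}$ on $G = (\mathbb{Z}/m\mathbb{Z})^{\times}$, this yields
\[
S^{\circ}(a) = \sum_{\chi \bmod m}\hat{S}^{\circ}(\chi)\chi(a).
\]

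Then I will split the characters by parity. Every character satisfies $\chi(-1)^2 = \chi(1) = 1$, so $\chi(-1) = \pm 1$, and the characters modulo $m$ divide into even and odd ones. By Theorem~\ref{thm:antisymmetry}, every even character has $\hat{S}^{\circ}(\chi) = 0$. Those terms drop out, leaving exactly the sum over odd $\chi$.

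Finally I will evaluate at primes. For a prime $p \nmid b$ with $p > $ whatever threshold makes the finite determination theorem apply, the residue $p \bmod m$ is a unit. Substituting $a = p \bmod m$ and using $\chi(p) = \chi(p \bmod m)$ gives the stated formula.

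There is no real obstacle here. The only point needing care is the well-definedness of $S^{\circ}$ on units modulo $m$, which rests on $b \mid m$. One might also add a remark that for $b = 2$ the unit group modulo $2^{\ell+1}$ still has odd characters when $\ell \ge 1$, so the decomposition is not vacuous. If it were vacuous, the identity $S^{\circ} \equiv 0$ would still be consistent with the statement.
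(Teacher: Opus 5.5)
Your proposal is correct and follows the paper's argument: the paper gives no separate proof and treats the corollary as immediate from Fourier inversion on $(\mathbb{Z}/m\mathbb{Z})^{\times}$ combined with the antisymmetry theorem, which removes the even-character terms. You also make explicit two details the paper leaves unstated: that $S^{\circ}$ is well defined modulo $m$ because $b \mid m$, and that the inversion must be evaluated only at primes $p \nmid b$.
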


All identities involving $F^{\circ}$ and
$P(s, \chi) = \sum_p \chi(p)/p^s$ are understood up
to the finite contribution of primes $p \le m$, which
is irrelevant for convergence.

\section{Convergence at $s = 1$}

The natural question is whether the centered collision
deviations cancel across the population of primes.

\begin{theorem}[Centered convergence]\label{thm:convergence}
For every base $b \ge 2$ and lag $\ell \ge 1$,
\[
F^{\circ}(1, \ell) = \sum_{\substack{p > m \\
\gcd(p,b) = 1}} \frac{S^{\circ}_p(\ell)}{p}
\]
converges.
\end{theorem}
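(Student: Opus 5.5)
The plan is to reduce $F^{\circ}(1,\ell)$ to a finite linear combination of prime sums twisted by non-principal characters, and then invoke the convergence of each such sum at $s=1$.

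\textbf{Reduction to character sums.} By the finite determination theorem~\cite{paperA}, $S^{\circ}_p(\ell)$ depends only on $p \bmod m$ with $m=b^{\ell+1}$. For $p>m$ with $\gcd(p,b)=1$ we have $\gcd(p,m)=1$. The Corollary to Theorem~\ref{thm:antisymmetry} then gives
\[
S^{\circ}_p(\ell)=\sum_{\chi(-1)=-1}\hat S^{\circ}(\chi)\,\chi(p).
\]
This sum is finite: it has at most $\phi(m)$ terms. Every odd character is non-principal, since $\chi_0(-1)=1$. In particular, no $\chi_0$ term appears. This matters, because $\sum_p \chi_0(p)/p$ diverges, and Theorem~\ref{thm:trivial} shows that the uncentered $S$ would carry the coefficient $-1/2$ on $\chi_0$. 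Because the sum over $\chi$ is finite, the partial sums up to $x$ can be interchanged with it:
\[
\sum_{m<p\le x}\frac{S^{\circ}_p}{p}
=\sum_{\chi(-1)=-1}\hat S^{\circ}(\chi)\sum_{m<p\le x}\frac{\chi(p)}{p}.
\]
So it suffices to show that $\lim_{x\to\infty}\sum_{p\le x}\chi(p)/p$ exists for each non-principal $\chi \bmod m$.

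\textbf{Convergence of $\sum_p \chi(p)/p$.} This is the classical Mertens theorem for arithmetic progressions:
\[
\sum_{p\le x,\ p\equiv a\ (m)}\frac1p=\frac{\log\log x}{\phi(m)}+C(m,a)+o(1).
\]
Expanding $\chi$ over residue classes, the $\log\log x$ terms cancel because $\sum_a\chi(a)=0$ for $\chi\ne\chi_0$. What remains is $\sum_a\chi(a)C(m,a)+o(1)$, which has a limit.

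An equivalent route is partial summation. By the prime number theorem for progressions, $\sum_{p\le x}\chi(p)\log p=o(x)$, so $A(x)=\sum_{p\le x}\chi(p)/\log p$ is $o(x/\log x)$. Abel summation with the weight $\log p/p$ then gives convergence. Only the Siegel–Walfisz-free input $L(1,\chi)\ne0$ is really needed, and it is standard.

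\textbf{Main obstacle.} The only substantive ingredient is the non-vanishing $L(1,\chi)\neq0$ underlying Mertens/Dirichlet for progressions. This is classical and can simply be cited. The point requiring care is bookkeeping. First, the identity holds for each individual $p>m$ coprime to $b$, so the excluded primes $p\le m$ are handled by the convention stated after the Corollary. Second, one must confirm that $\hat S^{\circ}(\chi)$ is independent of $p$ and finite, which is immediate since $S^{\circ}$ is a bounded function on a finite group. Finally, the ordering of summation is by increasing $p$, matching the partial sums above.
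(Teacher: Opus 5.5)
Your main argument is correct and is essentially the paper's own proof: expand $S^{\circ}$ over odd characters using the corollary to the antisymmetry theorem, note that no $\chi_0$ term occurs, and cancel the $\log\log x$ terms in Mertens' theorem for progressions, which needs only $L(1,\chi)\ne 0$.

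Drop the ``equivalent route,'' though, because it does not work as stated. From $A(x)=o(x/\log x)$, Abel summation only bounds the integrand by $o(1/t)$. Working directly with $\vartheta(x,\chi)=o(x)$ gives $o(1/(t\log t))$. Neither bound need be integrable, so an $o(\cdot)$ estimate alone does not give convergence. To repair it you need a quantitative bound such as $\vartheta(x,\chi)\ll x(\log x)^{-\delta}$, which the classical error term for fixed $m$ supplies. Alternatively, use the elementary estimate $\sum_{p\le x}\chi(p)\log p/p=O(1)$ and then partial summation against $1/\log p$.
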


\begin{proof}
By the antisymmetry theorem,
\[
F^{\circ}(1) = \sum_{\substack{\chi \bmod m \\
\chi(-1) = -1}} \hat{S}^{\circ}(\chi)
\sum_p \frac{\chi(p)}{p}.
\]
For each non-trivial character $\chi$ modulo $m$, the
prime harmonic sum $\sum_p \chi(p)/p$ converges. This
follows from Mertens' theorem for arithmetic
progressions~\cite{mertens,davenport}: the prime sum in
each residue class modulo $m$ satisfies
\[
\sum_{\substack{p \le x \\ p \equiv a \pmod{m}}}
\frac{1}{p} = \frac{\log\log x}{\phi(m)} + M_a + o(1),
\]
and the character weighting $\sum_a \chi(a) = 0$ cancels
the $\log\log x$ term. Since $F^{\circ}$ is a finite
linear combination of convergent sums, it converges.
\end{proof}

\section{Below $s = 1$}

Convergence at $s = 1$ asks little of each prime: the
weight $1/p$ decays rapidly. Lowering $s$ amplifies
every prime's contribution, demanding that finer
structure project coherently across the population.

For $\operatorname{Re}(s) > 1$, the prime character sum
$P(s, \chi) = \sum_p \chi(p)/p^s$ relates to the
Dirichlet $L$-function~\cite{davenport,iwaniec} by
\[
P(s, \chi) = \log L(s, \chi) - H(s, \chi),
\]
where $H(s, \chi)$ converges for
$\operatorname{Re}(s) > 1/2$.

\begin{theorem}[Conditional penetration]
\label{thm:penetration}
Suppose there exists $\sigma_0 \in (1/2, 1)$ such
that for every odd character $\chi$ modulo~$m$, the
$L$-function $L(s, \chi)$ has no zeros in the
half-plane $\operatorname{Re}(s) > \sigma_0$. Then
$F^{\circ}(s)$ converges for every real $s > \sigma_0$.
\end{theorem}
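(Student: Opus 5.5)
By the Corollary to the antisymmetry theorem, $F^{\circ}(s)$ is, up to the finitely many primes $p \le m$, the finite linear combination
\[
F^{\circ}(s) = \sum_{\substack{\chi \bmod m \\ \chi(-1) = -1}} \hat{S}^{\circ}(\chi)\, P(s,\chi),
\qquad P(s,\chi) = \sum_p \frac{\chi(p)}{p^s}.
\]
So it suffices to show that each $P(s,\chi)$ converges, as an ordinary series over primes in increasing order, for every real $s > \sigma_0$ and every odd $\chi$ modulo $m$. Every odd character is non-principal, so no pole at $s = 1$ enters. The hypothesis concerns exactly the $L$-functions of the characters that occur, and that is all we need.

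\textbf{Reduction to a prime-number-theorem error term.} Fix an odd $\chi$. It is induced by some primitive character $\chi^*$ of conductor $q \mid m$. The non-trivial zeros of $L(s,\chi)$ and $L(s,\chi^*)$ coincide: the extra Euler factors differ only at primes dividing $m$, and they vanish only on $\operatorname{Re}(s)=0$. Hence $L(s,\chi^*)$ is also zero-free for $\operatorname{Re}(s) > \sigma_0$. Set $\psi(x,\chi) = \sum_{n \le x} \chi(n)\Lambda(n)$. I will apply the truncated explicit formula
\[
\psi(x,\chi) = -\sum_{|\gamma| \le T} \frac{x^{\rho}}{\rho} + O\!\left(\frac{x \log^2(mxT)}{T} + \log^2(mx)\right),
\]
taken from \cite{davenport}. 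Every zero satisfies $\operatorname{Re}\rho \le \sigma_0$. The number of zeros with $|\gamma|\le T$ is $O(T\log(mT))$, which gives $\sum_{|\gamma|\le T} 1/|\rho| \ll \log^2(mT)$. Choosing $T = x$ then yields
\[
\psi(x,\chi) \ll_m x^{\sigma_0}\log^2 x .
\]
Prime powers $p^k$ with $k \ge 2$ contribute $O(x^{1/2})$. Partial summation from $\psi$ to $\theta(x,\chi) = \sum_{p\le x}\chi(p)\log p$ therefore gives the same bound for $\theta$, since $\sigma_0 > 1/2$.

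\textbf{Partial summation.} Write
\[
\sum_{p \le x} \frac{\chi(p)}{p^s} = \int_{2^-}^{x} \frac{d\theta(t,\chi)}{t^s \log t}.
\]
Integrating by parts, the boundary term is $\theta(x,\chi)/(x^s\log x) \ll x^{\sigma_0 - s}\log x \to 0$. The remaining integral has integrand
\[
\theta(t,\chi)\,\frac{d}{dt}\bigl(t^{-s}/\log t\bigr) \ll t^{\sigma_0 - s - 1}\log t ,
\]
which is absolutely integrable on $[2,\infty)$ whenever $s > \sigma_0$. So $P(s,\chi)$ converges for each odd $\chi$. Summing over the finitely many odd characters, with their fixed coefficients $\hat{S}^{\circ}(\chi)$, gives convergence of $F^{\circ}(s)$. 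Re-adding the finitely many omitted primes $p \le m$ does not affect convergence.

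\textbf{Main obstacle.} The key step is the bound $\psi(x,\chi) \ll x^{\sigma_0}\log^2 x$. It needs the explicit formula with a uniform truncation error and the zero-counting estimate for $L(s,\chi)$, which I will quote from \cite{davenport} rather than reprove. The hypothesis $\sigma_0 > 1/2$ is used only to absorb the prime-power terms; the stated range guarantees it. I will deliberately avoid the route through $P(s,\chi) = \log L(s,\chi) - H(s,\chi)$ and analytic continuation. That route shows $P$ extends holomorphically to $\operatorname{Re}(s) > \sigma_0$, but it does not by itself give convergence of the series, so the Chebyshev-function estimate is the honest path.
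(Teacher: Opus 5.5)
Your proof is correct and follows essentially the same route as the paper. Both reduce via antisymmetry to the odd $P(s,\chi)$, apply the truncated explicit formula with $T=x$ and the zero-sum estimate to get $\psi(x,\chi)\ll x^{\sigma_0}\log^2 x$, pass to $\vartheta$ using $\sigma_0>1/2$, and finish by partial summation with $1/(t^s\log t)$.

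The only difference is how imprimitive $\chi$ is handled. You transfer the zero-free hypothesis to the inducing primitive $\chi^*$ at the start. The paper uses that transfer only implicitly, then notes that $P(s,\chi)-P(s,\chi^*)$ is a finite sum. Your version is fine, since $\psi(x,\chi)-\psi(x,\chi^*)\ll_m \log x$ is absorbed by your error term.
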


\begin{proof}
By the antisymmetry theorem,
$F^{\circ}(s) = \sum \hat{S}^{\circ}(\chi)\, P(s, \chi)$
over odd non-trivial characters. It suffices to prove
$P(s, \chi)$ converges for each such $\chi$ and real
$s > \sigma_0$.

First assume $\chi$ is primitive. The explicit formula for
$\psi(x, \chi) = \sum_{n \le x} \Lambda(n)\chi(n)$
gives~\cite{davenport,iwaniec}
\[
\psi(x, \chi) = -\sum_{|\gamma| \le T}
\frac{x^{\rho}}{\rho}
+ O\!\left(\frac{x \log^2(mxT)}{T}\right),
\]
where $\rho = \beta + i\gamma$ runs over non-trivial
zeros of $L(s, \chi)$. By hypothesis $\beta \le \sigma_0$.
Choosing $T = x$ and applying the standard zero-sum
estimate (see~\cite{davenport}, Ch.~17,
or~\cite{iwaniec}, Theorem~5.15):
\[
\psi(x, \chi) \ll x^{\sigma_0} \log^2(mx).
\]
Since $\psi - \vartheta \ll x^{1/2}$ and
$\sigma_0 > 1/2$, we have
$\vartheta(x, \chi) \ll x^{\sigma_0} \log^2(mx)$,
where $\vartheta(x, \chi) = \sum_{p \le x}
\chi(p) \log p$.

Partial summation with
$f(t) = 1/(t^s \log t)$ gives
\[
\sum_{p \le X} \frac{\chi(p)}{p^s}
= \frac{\vartheta(X, \chi)}{X^s \log X}
+ \int_2^X \vartheta(t, \chi)\,
\frac{s \log t + 1}{t^{s+1} (\log t)^2}\, dt.
\]
The boundary term tends to $0$, and the integral
converges because $s > \sigma_0$. Thus $P(s, \chi)$
converges. If $\chi$ is imprimitive, induced from a
primitive $\chi^{*}$, then
$P(s, \chi) - P(s, \chi^{*})$ is a finite sum over
primes dividing $m$, so convergence is unchanged.
Thus $F^{\circ}$ converges as a finite linear
combination.
\end{proof}

\begin{proposition}[Analytic continuation]
\label{prop:analytic}
Define
\[
\mathcal{F}^{\circ}(s) = \sum_{\chi \text{ odd}}
\hat{S}^{\circ}(\chi)\,
\bigl(\log L(s, \chi) - H(s, \chi)\bigr).
\]
Then $\mathcal{F}^{\circ}$ agrees with $F^{\circ}$
for $\operatorname{Re}(s) > 1$ (up to the finite
contribution of primes $p \le m$) and extends
holomorphically to every simply connected region in
$\operatorname{Re}(s) > 1/2$ on which all relevant
odd $L(s, \chi)$ are nonvanishing.
\end{proposition}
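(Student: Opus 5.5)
The proof has two parts. First, $\mathcal{F}^{\circ} = F^{\circ}$ on $\operatorname{Re}(s)>1$. Second, each summand of $\mathcal{F}^{\circ}$ continues holomorphically to any simply connected zero-free region in $\operatorname{Re}(s)>1/2$. Since $\mathcal{F}^{\circ}$ is a finite linear combination over odd characters modulo $m$, it suffices to treat each $\chi$ separately. Every odd character is non-principal, so $L(s,\chi)$ is entire and has no pole at $s=1$.

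For agreement on $\operatorname{Re}(s)>1$, I will use the Euler product $L(s,\chi)=\prod_p(1-\chi(p)p^{-s})^{-1}$, which converges absolutely there. Taking the principal branch of each factor gives
\[
\log L(s,\chi)=\sum_p\sum_{k\ge1}\frac{\chi(p)^k}{k p^{ks}} .
\]
This series is absolutely convergent, and it defines the branch of $\log L$ that tends to $0$ as $\operatorname{Re}(s)\to+\infty$. Writing
\[
H(s,\chi)=\sum_p\sum_{k\ge2}\frac{\chi(p)^k}{kp^{ks}},
\]
we get $P(s,\chi)=\log L(s,\chi)-H(s,\chi)$. Summing against $\hat{S}^{\circ}(\chi)$ and invoking the Corollary to the antisymmetry theorem, which expands $S^{\circ}(p)$ over odd characters for $\gcd(p,b)=1$, yields $\mathcal{F}^{\circ}(s)=\sum_p S^{\circ}_p p^{-s}$. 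Primes dividing $b$ have $\chi(p)=0$ and so contribute nothing. The primes $p\le m$ account for the stated finite discrepancy, since each is a single term $S^{\circ}_p p^{-s}$, which is entire.

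The term $H(s,\chi)$ is holomorphic on $\operatorname{Re}(s)>1/2$. Indeed, $|\chi(p)^k/(kp^{ks})|\le p^{-k\sigma}$, and $\sum_p\sum_{k\ge2}p^{-k\sigma}\ll\sum_p p^{-2\sigma}/(1-p^{-\sigma})$. This sum converges locally uniformly for $\sigma>1/2$, so by Weierstrass $H$ is holomorphic there.

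The main point is $\log L(s,\chi)$. Let $\Omega\subset\{\operatorname{Re}(s)>1/2\}$ be simply connected with every odd $L(s,\chi)$ nonvanishing on $\Omega$. Then $L'/L$ is holomorphic on $\Omega$, and on a simply connected domain a nonvanishing holomorphic function has a holomorphic logarithm $\int L'/L$, unique up to an additive constant in $2\pi i\mathbb{Z}$.

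The obstacle is that this logarithm must match the Euler-product branch on $\operatorname{Re}(s)>1$, so $\Omega$ has to contain, or be joined to, part of that half-plane. I will interpret ``region'' as a simply connected domain meeting $\operatorname{Re}(s)>1$. Nonvanishing of $L(s,\chi)$ on $\operatorname{Re}(s)\ge1$ is standard, so one may enlarge $\Omega$ by $\{\operatorname{Re}(s)>1\}$ whenever the union stays simply connected; otherwise meeting $\operatorname{Re}(s)>1$ is taken as part of the hypothesis. On the connected set $\Omega\cap\{\operatorname{Re}(s)>1\}$, or on a connected component of it, I fix the additive constant so that the two logarithms agree. The identity theorem then makes the extension unique, and $\mathcal{F}^{\circ}$ is holomorphic on $\Omega$ as a finite sum of holomorphic functions.

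If $\Omega\cap\{\operatorname{Re}(s)>1\}$ is disconnected, the constants on different components could in principle differ by multiples of $2\pi i$. I will avoid this by requiring $\Omega\cup\{\operatorname{Re}(s)>1\}$ to be simply connected, which is the natural reading of the statement.
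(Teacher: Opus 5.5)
Your proposal is correct and follows the paper's route: the Euler product gives $P(s,\chi)=\log L(s,\chi)-H(s,\chi)$ on $\operatorname{Re}(s)>1$, and on a simply connected zero-free region a holomorphic branch of $\log L(s,\chi)$, together with the holomorphy of $H$ on $\operatorname{Re}(s)>1/2$, makes the finite sum holomorphic. Your extra step of pinning that branch to the Euler-product logarithm, by requiring $\Omega\cup\{\operatorname{Re}(s)>1\}$ to be simply connected, is a point the paper's proof passes over. It is needed in general: otherwise the continuation could fail to agree with $F^{\circ}$ on some component of $\Omega\cap\{\operatorname{Re}(s)>1\}$, differing there by $2\pi i\sum_{\chi}k_{\chi}\hat{S}^{\circ}(\chi)$ with $k_{\chi}\in\mathbb{Z}$.
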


\begin{proof}
For $\operatorname{Re}(s) > 1$, the Euler product gives
$P(s, \chi) = \log L(s, \chi) - H(s, \chi)$, so
$\mathcal{F}^{\circ}(s) = F^{\circ}(s)$. On a simply
connected zero-free region $\Omega$ in
$\operatorname{Re}(s) > 1/2$, each $L(s, \chi)$ admits
a holomorphic branch of $\log L(s, \chi)$, and
$H(s, \chi)$ is holomorphic on $\Omega$. The finite
sum $\mathcal{F}^{\circ}$ is therefore holomorphic
on~$\Omega$.
\end{proof}

\begin{table}[h]
\centering
\begin{tabular}{rrrrrr}
\toprule
$s$ & base $3$ & base $7$ & base $10$ & base $12$ \\
\midrule
$1.0$ & $-0.15$ & $-0.14$ & $-0.20$ & $-0.28$ \\
$0.8$ & $-0.19$ & $-0.25$ & $-0.40$ & $-0.62$ \\
$0.6$ & $-0.23$ & $-0.46$ & $-0.84$ & $-1.35$ \\
$0.5$ & $-0.26$ & $-0.64$ & $-1.22$ & $-1.88$ \\
\bottomrule
\end{tabular}
\caption{$F^{\circ}(s)$ across bases at $348{,}488$
primes~\cite{nfield}. Computation indicates convergence
persists
well below $s = 1$.}
\end{table}

\begin{remark}
The products
$\operatorname{Re}(\hat{S}^{\circ}(\chi) \cdot
P(s, \chi))$ have mixed signs: in base~$10$ at
$s = 0.5$, thirteen of twenty are positive and
seven are negative. The collision coefficients
$|\hat{S}^{\circ}(\chi)|$ are negatively correlated
with $|P(s, \chi)|$ (Pearson $\rho \approx -0.25$
across all prime bases tested). The collision
invariant appears to place larger Fourier weight on
characters less sensitive to $L$-function zeros.
These observations suggest that any persistence of
convergence below $s = 1$ would be a collective
phenomenon, constraining the joint zero distribution
of the odd $L$-functions modulo~$m$ rather than each
$L$-function individually.
\end{remark}

\section{The Base Sum}

For a fixed prime $p$, the collision deviation
$S_1(p, b)$ varies across bases. The \emph{base sum}
\[
R(p) = \sum_b w(b)\, S^{\circ}_1(p, b),
\]
with a fixed convergent weighting $w(b)$, aggregates
collision structure across bases. Since $S^{\circ}$
depends on $p \bmod b^2$ at each base $b$, the function
$R(p)$ combines information from the finite groups
$(\mathbb{Z}/b^2\mathbb{Z})^{\times}$ across all bases.

Computation reveals that $R(p)$ has mod-$3$ structure:
at lag~$1$ in every base $b$ with $3 \nmid b$, the
mean of $S$ over units $a \equiv 2 \pmod{3}$ equals
exactly $-1/2$ (the grand mean), while the mean over
$a \equiv 1 \pmod{3}$ deviates. This observation,
stable across all bases tested, motivates the
neutrality theorem in the following section.

\begin{observation}
The base-summed prime harmonic sum
\[
F_R(s) = \sum_p \frac{R(p)}{p^s}
= \sum_b w(b)\, F^{\circ}_b(s)
\]
nearly vanishes. At $s = 0.5$ with
$w(b) = 1/b^2$ and bases $b = 3, \ldots, 31$:
individual $F^{\circ}_b(0.5)$ values range from
$-1.4$ to $+5.1$, but their weighted sum is $-0.01$.
The collision invariant's structural content appears
to be base-specific.
\end{observation}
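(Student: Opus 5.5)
The Observation bundles a formal identity with a numerical claim, and I would treat the two separately.

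\emph{The identity.} The equality $F_R(s) = \sum_b w(b)\,F^{\circ}_b(s)$ is an interchange of summations. Since $S^{\circ}_1(p,b)$ takes only finitely many values for fixed $b$, I will first establish a uniform bound $|S^{\circ}_1(p,b)| \le C(b)$, with $C(b)$ growing at most polynomially in $b$. I will then choose $w(b) = 1/b^2$ small enough that $\sum_b w(b)\,C(b)$ converges. For $\operatorname{Re}(s) > 1$ the double series $\sum_p \sum_b w(b)\,S^{\circ}_1(p,b)\,p^{-s}$ is absolutely convergent, so Fubini gives the identity there.

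\emph{Extending to $s = 0.5$.} Here absolute convergence fails. I would read $F_R(s)$ as $\sum_b w(b)\,\mathcal{F}^{\circ}_b(s)$, using Proposition~\ref{prop:analytic}. Each term is then a finite odd-character combination of $\log L(s,\chi) - H(s,\chi)$. Outside a given zero-free region, each $F^{\circ}_b(s)$ is defined only through this continuation, or conditionally via Theorem~\ref{thm:penetration}.

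\emph{The main obstacle.} The hard step is interchanging the $b$-sum with the analytic continuation. This needs locally uniform bounds on $\sum_{\chi}|\hat{S}^{\circ}_b(\chi)|\,|P(s,\chi)|$ that grow slowly enough in $b$ for $\sum_b w(b)$ to absorb them. I would try bounding $\sum_\chi |\hat{S}^{\circ}(\chi)|^2$ by Parseval, which gives at most $C(b)^2$. I would pair this with a conditional bound $P(s,\chi) \ll \log(mT)$, taken from the explicit formula under GRH in the relevant range. Because this is conditional, I would state the identity at $s = 0.5$ only as conditional, or only for the truncated range $b = 3,\dots,31$. For that finite range the identity is a trivial rearrangement of finitely many convergent series.

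\emph{The numerical part.} The statement ``nearly vanishes'' is not a theorem. I would support it by reproducing the computation: evaluate each $F^{\circ}_b(0.5)$ with the $348{,}488$ primes of~\cite{nfield}, weight by $1/b^2$, and report the sum $-0.01$ against the spread from $-1.4$ to $+5.1$.

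I would also look for a partial structural explanation through the neutrality phenomenon for the mod-$3$ classes. The hope is that the dominant character contributions are shared across bases, and hence interfere in the aggregate. I would present this as heuristic, not proved.
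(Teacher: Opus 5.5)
The paper gives no proof of this Observation. It is a bare report of a finite computation: lag $1$, $w(b)=1/b^{2}$, $b=3,\dots,31$, partial sums over the $348{,}488$ primes of the data set. For finitely many bases and finitely many primes, $F_R=\sum_b w(b)F^{\circ}_b$ is a finite rearrangement, and the quoted numbers can only be confirmed by recomputing them. Your last step is therefore the whole content, and two cautions apply to it.

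\begin{itemize}
\item The paper's tables do not obviously refer to one normalization. Table~1 lists base $12$ at $-1.88$ for $s=0.5$, outside the quoted range $[-1.4,\,5.1]$. It lists base $10$ at $-1.22$, against $0.46$ in Table~2. You must pin down the lag and centering before $-1.4$, $5.1$, $-0.01$ can be checked.
\item Your infinite-$b$ Fubini step is beside the point. It would also need $\sum_b C(b)/b^{2}<\infty$, which polynomial growth of $C(b)$ does not give; $w$ is fixed by the statement and not yours to choose.
\end{itemize}

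The genuine gap is your treatment of $s=0.5$. That point lies outside every result you invoke: Proposition~\ref{prop:analytic} and Theorem~\ref{thm:penetration} both need $\operatorname{Re}(s)>1/2$ strictly. GRH does not help either. Let $\chi$ be a real odd character modulo $b^{2}$, for instance the character $\chi_{-4}$ modulo $4$ when $b$ is even, or the Legendre symbol of a prime factor $q\equiv 3\pmod 4$ of $b$. Then $\sum_p \chi(p)p^{-1/2}$ diverges unconditionally:
\begin{itemize}
\item If it converged, $P(s,\chi)$ and $H(s,\chi)$ would be holomorphic on $\operatorname{Re}(s)>1/2$, with $L(s,\chi)=\exp\bigl(P(s,\chi)+H(s,\chi)\bigr)$ there.
\item As $\sigma\to 1/2^{+}$, $P(\sigma,\chi)$ would tend to a finite limit by Abel's theorem for Dirichlet series.
\item Meanwhile $H(\sigma,\chi)=\tfrac12\sum_{p\nmid m}p^{-2\sigma}+O(1)\to\infty$.
\item This forces $L(\sigma,\chi)\to\infty$, which is impossible for an entire function.
\end{itemize}

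Consequences for your plan:
\begin{itemize}
\item A bound like $P(s,\chi)\ll\log(mT)$ has nothing to bound at $s=1/2$.
\item ``Finitely many convergent series'' is unjustified, since the constituent sums $P(0.5,\chi)$ diverge for every real odd $\chi$.
\item Reading the numbers as values of $\mathcal{F}^{\circ}_b(0.5)$ conflates an analytic continuation, which does not reach $\operatorname{Re}(s)=1/2$, with the truncated partial sums the Observation actually reports.
\end{itemize}
The correct reading asserts no limit at all. The Observation concerns partial sums at one fixed height, and no conditional hypothesis is needed or available.

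If you want a structural heuristic, the neutrality theorem is the wrong vehicle. For $3\nmid b$ the character modulo $3$ is not among the characters modulo $b^{2}$, and $F^{\circ}_b$ has no principal term (Theorem~\ref{thm:cancel}). Instead, expand each $F^{\circ}_b$ over primitive characters. Then look for cancellation in the aggregated coefficients $\sum_b w(b)\,\hat{S}^{\circ}_b(\chi^{*})$ of characters shared by many bases, such as $\chi_{-4}$.
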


\section{The Mod-$3$ Structure}

\begin{theorem}[Neutrality]\label{thm:neutral}
Let $3 \nmid b$ and $m = b^{\ell+1}$. Let
$k^{*}$ be the unique solution to
$2k \equiv m \pmod{3}$ (i.e., $k^{*} = 2$ if
$m \equiv 1$ and $k^{*} = 1$ if $m \equiv 2
\pmod{3}$). Then the mean of $S_{\ell}$ over units
$a \equiv k^{*} \pmod{3}$ in
$(\mathbb{Z}/m\mathbb{Z})^{\times}$ equals $-1/2$
(the grand mean).
\end{theorem}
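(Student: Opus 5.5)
The plan is to reuse the reflection identity from~\cite{paperA}, exactly as in the proof of Theorem~\ref{thm:trivial}, but restricted to a single residue class modulo~$3$. The residue class $k^{*}$ is precisely the one that the reflection $a \mapsto m - a$ maps to itself. Once that is established, pairing units inside the class gives the mean $-1/2$ immediately.

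First, I will check that the reflection preserves the class $k^{*}$. Since $3 \nmid b$, we have $3 \nmid m$, so $m \equiv 1$ or $2 \pmod 3$. For a unit $a$ in $(\mathbb{Z}/m\mathbb{Z})^{\times}$ with representative $1 \le a \le m-1$, the reflected unit $m-a$ satisfies $m - a \equiv m - a \pmod 3$. Hence $a \equiv k \pmod 3$ maps to $m - a \equiv m - k \pmod 3$. The class is fixed exactly when $2k \equiv m \pmod 3$, that is, when $k = k^{*}$. Here one point needs care: "the residue of $a$ modulo $3$" must be read on the canonical representative in $[1, m-1]$, because $3 \nmid m$ means residues mod~$3$ are not well defined on $\mathbb{Z}/m\mathbb{Z}$. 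I will state this convention explicitly. It is the one used in the statement, since $S_\ell$ is evaluated on representatives $p \bmod m$.

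Second, I will check that the reflection is a fixed-point-free involution on the set $U_{k^{*}} = \{a \in [1,m-1] : \gcd(a,m)=1,\ a \equiv k^{*} \pmod 3\}$. It is an involution and preserves units. A fixed point would require $a = m/2$, which is never a unit for $m > 2$. If $b$ is odd, $m/2$ is not even an integer. If $b$ is even, $m/2$ shares the factor $2$ with $m$ when $\ell \ge 1$, because then $4 \mid m$. So $U_{k^{*}}$ splits into pairs $\{a, m-a\}$.

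Third, I will apply the reflection identity $S(a) + S(m-a) = -1$ to each pair. Each pair therefore has average $-1/2$, and so the mean of $S_\ell$ over $U_{k^{*}}$ is $-1/2$. Theorem~\ref{thm:trivial} shows this equals the grand mean. I would also remark why the other class does not behave this way: the reflection exchanges it with a class not in the units set, since $0 \pmod 3$ is excluded only when $3 \mid a$. More precisely, the classes $1$ and $2$ are each self-paired or cross-paired with class $0$, and the third class is not constrained.

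The only real obstacle is the well-definedness issue in the first step, namely fixing integer representatives so that "$a \bmod 3$" makes sense. Everything else is the same pairing argument as in Theorem~\ref{thm:trivial}.
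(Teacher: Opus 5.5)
Your proof is correct and follows the paper's argument: the reflection $a \mapsto m-a$ fixes the residue class $k^{*}$, and the identity $S(a)+S(m-a)=-1$ splits the units in that class into pairs averaging $-1/2$. Your explicit choice of representatives in $[1,m-1]$ and your check that $a=m/2$ is never a unit fill in details the paper leaves implicit.

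One correction to your closing side remark, which is not needed for the proof: class $0 \bmod 3$ does contain units (e.g.\ $a=3$, since $3 \nmid m$). The non-fixed nonzero class is swapped with class $0$, so only the sum of those two class means is forced to equal $-1$, as in Corollary~\ref{cor:swap}.
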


\begin{proof}
The reflection $a \mapsto m - a$ sends class $k$ to
class $m - k \pmod{3}$. It fixes the class $k^{*}$
satisfying $k^{*} \equiv m - k^{*} \pmod{3}$. Since
$3$ is odd, the solution is unique. The reflection
identity $S(a) + S(m{-}a) = -1$~\cite{paperA} pairs
units within this class, and each pair averages
to~$-1/2$.
\end{proof}

\begin{corollary}\label{cor:swap}
The remaining two classes modulo $3$ (other than
$k^{*}$) are swapped by reflection, and their means
satisfy
$\overline{S}_{k} + \overline{S}_{m-k} = -1$.
\end{corollary}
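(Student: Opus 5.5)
The plan is to reuse the pairing argument from the proof of the neutrality theorem (Theorem~\ref{thm:neutral}), now applied to the two residue classes modulo~$3$ other than $k^{*}$. Since $3 \nmid b$, we have $m \not\equiv 0 \pmod 3$, so $k^{*} \in \{1,2\}$. The residue classes modulo~$3$ that contain units of $(\mathbb{Z}/m\mathbb{Z})^{\times}$ are then $0,1,2$, because $3 \nmid m$ and so units can lie in every class. The reflection $a \mapsto m-a$ acts on classes by $k \mapsto m-k \pmod 3$. This is an involution on $\mathbb{Z}/3\mathbb{Z}$ with exactly one fixed point, $k^{*}$, by the uniqueness argument in the proof of Theorem~\ref{thm:neutral}. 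An involution on a three-element set with exactly one fixed point must swap the other two elements. Hence the two remaining classes are $k$ and $m-k \pmod 3$, and they are interchanged by reflection.

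Next I show that the reflection restricts to a bijection between the units in class $k$ and the units in class $m-k$. The map $a \mapsto m-a$ preserves $\gcd(a,m)$, so it sends units to units. It is its own inverse, and it carries class $k$ into class $m-k$. Therefore both classes contain the same number $N$ of units, and this bijection pairs them off.

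Finally I sum the reflection identity $S(a)+S(m-a)=-1$ from~\cite{paperA} over the $N$ units $a$ in class $k$. Since $a \mapsto m-a$ is a bijection onto class $m-k$, the sum $\sum_{a} S(m-a)$ over class $k$ equals $\sum_{a'} S(a')$ over class $m-k$. Writing $\overline{S}_k$ and $\overline{S}_{m-k}$ for the means over units in each class, this gives $N\overline{S}_k + N\overline{S}_{m-k} = -N$. Dividing by $N$ yields $\overline{S}_k + \overline{S}_{m-k} = -1$.

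The only step that needs care is confirming $N>0$, so that both means are defined. For this I would exhibit a unit in each class, for example by using CRT between $3$ and $m$ to pick an integer coprime to $m$ in any prescribed class; since $\gcd(3,m)=1$ the residue modulo $3$ is unconstrained. Everything else is a direct repetition of the argument in Theorem~\ref{thm:trivial}.
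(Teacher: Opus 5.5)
Your core argument is correct and is the one the paper relies on. The paper gives no separate proof of this corollary; it follows from the reflection pairing in the proof of Theorem~\ref{thm:neutral}, together with the ``averaging over each paired class'' step of Theorem~\ref{thm:antisymmetry}. That is exactly what you do: $a \mapsto m-a$ induces the involution $k \mapsto m-k$ on $\mathbb{Z}/3\mathbb{Z}$ with unique fixed point $k^{*}$, and summing $S(a)+S(m-a)=-1$ over one swapped class gives $\overline{S}_k+\overline{S}_{m-k}=-1$.

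The step that fails as written is your justification that $N>0$. When $3\nmid m$, the class of a unit modulo~$3$ is not a function on $(\mathbb{Z}/m\mathbb{Z})^{\times}$. It only makes sense after fixing representatives $1\le a\le m-1$, and that choice is also what makes $a\mapsto m-a$ act on classes by $k\mapsto m-k$. CRT modulo $3m$ gives an integer coprime to $m$ in a prescribed class, but not one in that window. So it says nothing about which classes the representatives occupy.

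In fact your preliminary claim that every class mod~$3$ contains units is false. For $b=2$, $\ell=1$, $m=4$, the units are $1$ and $3$, so class~$2$ (which is $k^{*}$ here) is empty. This does not affect the corollary.

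The repair is short and avoids that claim. Since $m\not\equiv 0 \pmod 3$, you have $k^{*}\ne 0$, so class~$0$ is always one of the two swapped classes; the swapped pair is $\{0,\, m \bmod 3\}$. Because $3\nmid m$ and $m=b^{\ell+1}\ge 4$, the number $a=3$ is a unit lying in class~$0$. Your bijection then places $m-3$ in the partner class and gives $N\ge 1$.
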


\begin{remark}
At lag $\ell = 1$, $m = b^2 \equiv 1 \pmod{3}$ for
every $b$ with $3 \nmid b$ (since $1^2 \equiv
2^2 \equiv 1$). In this case $k^{*} = 2$: class~$2$
is neutral and class~$1$ carries the bias. Since
primes $p > 3$ satisfy $p \bmod 3 \in \{1, 2\}$,
the prime harmonic sum sees class~$1$ (biased) and
class~$2$ (neutral).
\end{remark}

\begin{theorem}[Perfect cancellation]\label{thm:cancel}
The centered sum $F^{\circ}$ contains no Mertens term.
Its decomposition over characters modulo~$m$ involves
only non-trivial characters:
\[
F^{\circ}(s) = \sum_{\substack{\chi \bmod m \\
\chi(-1) = -1}} \hat{S}^{\circ}(\chi)\, P(s, \chi).
\]
\end{theorem}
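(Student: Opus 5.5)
The plan is to show directly that the principal-character coefficient $\hat{S}^{\circ}(\chi_0)$ vanishes, and then to combine this with the Antisymmetry theorem and the inverse transform. Expanding $S^{\circ}$ by the inverse transform gives
\[
S^{\circ}(p) = \sum_{\chi \bmod m} \hat{S}^{\circ}(\chi)\,\chi(p).
\]
Multiplying by $p^{-s}$ and summing over primes $p>m$ (so $\gcd(p,m)=1$) expresses $F^{\circ}(s)$ as the finite combination $\sum_{\chi} \hat{S}^{\circ}(\chi) P(s,\chi)$. Here we work up to the finite contribution of primes $p \le m$, as the paper stipulates. The interchange of the finite $\chi$-sum with the prime sum is harmless: it is valid absolutely for $\operatorname{Re}(s)>1$, and at $s=1$ and below it holds as an identity of partial sums, so convergence transfers term by term. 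The only term that could produce a Mertens contribution $\log\log x$ (at $s=1$) or a pole-type $\log\frac{1}{s-1}$ singularity is the principal one, $\hat{S}^{\circ}(\chi_0) P(s,\chi_0)$.

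There are two routes to $\hat{S}^{\circ}(\chi_0)=0$, and I would give both. The quickest is to note that $\chi_0$ is even, since $\chi_0(-1)=1$, so the Antisymmetry theorem already forces $\hat{S}^{\circ}(\chi_0)=0$; the same theorem kills every other even character. Independently, by definition
\[
\hat{S}^{\circ}(\chi_0) = \frac{1}{\phi(m)} \sum_a S^{\circ}(a).
\]
This sum splits over the spectral classes $R$, and within each class
\[
\sum_{R(a)=R} \bigl(S(a) - \overline{S}_R\bigr) = 0
\]
by the definition of $\overline{S}_R$ as the class average. So centering removes the mean $-1/2$ from Theorem~\ref{thm:trivial} class by class, and hence globally.

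After both reductions, the only surviving terms are those with $\chi$ odd, which are automatically non-trivial. This gives the displayed formula, and there is no term with $P(s,\chi_0)$, i.e.\ no Mertens term.

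I do not expect a serious obstacle. The one point needing care is that spectral classes are defined by $(a-1) \bmod b$, which is a function on $(\mathbb{Z}/m\mathbb{Z})^{\times}$ because $b \mid m$. This makes $S^{\circ}$ a genuine function on the group, so the Fourier expansion applies. I would also remark on the contrast with the uncentered sum: there $\hat{S}(\chi_0)=-1/2$ produces the divergent $-\tfrac12\log\log x$ term, which is exactly what centering cancels.
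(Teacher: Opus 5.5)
Your proposal is correct, and its main route is the paper's proof: $\chi_0$ is even, so the Antisymmetry theorem kills $\hat{S}^{\circ}(\chi_0)$ along with every other even coefficient, and the surviving odd characters are automatically non-trivial. Your second route (summing $S^{\circ}$ class by class) is a worthwhile extra, since it shows that the absence of the Mertens term needs only that centering gives $S^{\circ}$ mean zero, not the reflection identity, although restricting the displayed sum to odd characters still requires antisymmetry.
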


\begin{proof}
By the antisymmetry theorem, only odd characters
contribute. Every odd character is non-trivial (the
trivial character is even). Therefore no term
$\sum_p p^{-s}$ appears.
\end{proof}

\begin{corollary}
Let $\mu_3(a)$ be the mean of $S^{\circ}$ over all
units $a' \equiv a \pmod{3}$ in
$(\mathbb{Z}/m\mathbb{Z})^{\times}$. For primes
$p > 3$, write $\mu_3(p) = c_0 + c_1 \chi_3(p)$,
where $\chi_3$ is the non-trivial character
modulo~$3$ and
$c_0 = (\mu_3^{(1)} + \mu_3^{(2)})/2$ is the average
of the two mod-$3$ class means visible to primes. Set
$\mathcal{M}(s) = \sum_p \mu_3(p)/p^s$ and
$F^{\circ\circ}(s) = F^{\circ}(s) - \mathcal{M}(s)$.

When $c_0 \ne 0$, the sum $\mathcal{M}$ contains a
principal-character term $c_0 \sum_p p^{-s}$, and
$F^{\circ\circ}$ carries $-c_0 \sum_p p^{-s}$ to
compensate. These principal-character terms cancel
algebraically in $F^{\circ}$ by
Theorem~\ref{thm:cancel}.
\end{corollary}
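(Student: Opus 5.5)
The plan is to verify the decomposition of $\mu_3$ directly and then track the principal-character content through the linear identity $F^{\circ} = \mathcal{M} + F^{\circ\circ}$. For primes $p>3$, $p \bmod 3$ takes only the values $1$ and $2$, and $\chi_3(1)=1$, $\chi_3(2)=-1$. The function $\mu_3$ restricted to these two classes therefore equals $c_0 + c_1\chi_3(p)$ with $c_0 = (\mu_3^{(1)}+\mu_3^{(2)})/2$ and $c_1 = (\mu_3^{(1)}-\mu_3^{(2)})/2$. This is just solving a $2\times 2$ linear system, and I will record it first.

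Next I will substitute this into $\mathcal{M}(s)$. For $\operatorname{Re}(s)>1$, where everything converges absolutely, this gives
\[
\mathcal{M}(s) = c_0 \sum_{p>3} p^{-s} + c_1 \sum_{p>3} \frac{\chi_3(p)}{p^s}.
\]
The first piece is the principal-character (Mertens) term. It is genuinely present whenever $c_0\neq 0$, since $\sum_p p^{-s}$ diverges as $s\to 1^+$ like $\log\frac{1}{s-1}$, and the second piece stays bounded there. It is worth noting why $c_0$ need not vanish. By the Neutrality theorem and Corollary~\ref{cor:swap}, the class $k^*$ carries mean $-1/2$ for $S$. However, $\mu_3$ is a mean of the centered $S^{\circ}$, which subtracts the spectral-class means $\overline{S}_R$, and the mod-$3$ classes need not align with spectral classes. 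So $c_0$ is in general an unrelated constant. I would not try to prove $c_0\neq 0$, since the statement is conditional on it.

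Then I will write $F^{\circ\circ} = F^{\circ} - \mathcal{M}$ and invoke Theorem~\ref{thm:cancel}. That theorem expresses $F^{\circ}$ as a combination of $P(s,\chi)$ over odd, hence non-trivial, characters modulo $m$, so $F^{\circ}$ has zero coefficient on the principal character. Since the coefficient on the principal character is linear, the coefficient of $\sum_p p^{-s}$ in $F^{\circ\circ}$ must be $0 - c_0 = -c_0$. To make "coefficient on the principal character" precise, I will lift both $\mu_3$ and $S^{\circ}$ to functions on $(\mathbb{Z}/m'\mathbb{Z})^{\times}$ with $m' = \operatorname{lcm}(m,3)$. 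Note that $m' = 3m$ because $3\nmid b$. I then take their $\chi_0$-Fourier coefficient, which is the average over units, and use the fact that $P(s,\chi_0)$ differs from $\sum_p p^{-s}$ only by finitely many primes dividing $m'$. The $\chi_0$-coefficient of $\mu_3$ is $c_0$, because $\chi_3$ has mean zero over units mod $3$. The $\chi_0$-coefficient of $S^{\circ}$ is $0$, by Antisymmetry. Adding the two gives $\mathcal{M} + F^{\circ\circ}$ with principal-character terms $c_0 - c_0 = 0$, which is the claimed algebraic cancellation.

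The main obstacle is the lifting step. The characters appearing in $\mathcal{M}$ (modulo $3$) and those in $F^{\circ}$ (modulo $m$) live on different groups, so the "principal-character term" of each must be compared on the common modulus $3m$. This requires checking that inducing characters from modulus $3$ and from modulus $m$ to modulus $3m$ preserves the principal coefficient and changes each $P(s,\chi)$ only by finitely many primes. That is the same imprimitivity remark already used in the proof of Theorem~\ref{thm:penetration}.
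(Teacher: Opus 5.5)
Your proposal is correct and is essentially the paper's argument. You split $\mu_3=c_0+c_1\chi_3$ on the two classes seen by primes $p>3$, read off the principal term $c_0\sum_p p^{-s}$ of $\mathcal{M}$, and obtain $-c_0\sum_p p^{-s}$ in $F^{\circ\circ}=F^{\circ}-\mathcal{M}$ from the absence of any principal term in $F^{\circ}$ (Theorem~\ref{thm:cancel}); your lift to modulus $\operatorname{lcm}(m,3)$ makes explicit a comparison the paper leaves tacit. One sharpening of your aside on why $c_0$ need not vanish: since $S^{\circ}(a)+S^{\circ}(m-a)=0$, the neutrality argument applies verbatim to $S^{\circ}$ and forces $\mu_3^{(k^*)}=0$, so $c_0=\tfrac12\mu_3^{(k')}=-\tfrac12\mu_3^{(0)}$ for the other class $k'\in\{1,2\}$ (the one reflection swaps with class $0$), rather than an unrelated constant.
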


\begin{table}[h]
\centering
\begin{tabular}{rrr}
\toprule
$s$ & $F^{\circ}$ & $F^{\circ\circ}$ \\
\midrule
$1.0$ & $0.08$ & $0.77$ \\
$0.7$ & $0.27$ & $12.3$ \\
$0.5$ & $0.46$ & $111$ \\
\bottomrule
\end{tabular}
\caption{Numerically, removing the mod-$3$ component
destroys convergence below $s = 1$. Base~$10$,
$348{,}488$ primes.}
\end{table}

\begin{remark}
In base~$3$ at lag~$1$, $m = 9 \equiv 0 \pmod{3}$,
so the neutrality theorem does not apply (all units
satisfy $a \not\equiv 0$). At lag~$1$ in bases with
$3 \nmid b$, $m = b^2 \equiv 1 \pmod{3}$ and class~$2$
is neutral. In general, the neutral class $k^{*}$
depends on $m \bmod 3$.
\end{remark}

\section{Remarks}

The collision transform converts a digit-dynamic invariant
into a standard object of analytic number theory. The
invariant is new; the tools applied to it are classical.
Convergence at $s = 1$ is a theorem. Convergence below
$s = 1$ is conditional on the absence of $L$-function
zeros and is controlled by the sign structure of the
character decomposition.

The antisymmetry theorem selects which $L$-functions
participate (odd characters only). The neutrality theorem
and perfect cancellation eliminate the principal-character
term. The observed anti-correlation between collision
coefficients and prime character sum magnitudes is
consistent with the persistence of convergence: the
collision invariant's Fourier spectrum appears to be
misaligned with the zero-sensitive directions in
character space.


\end{document}